\newcommand \datum {August 3, 2020}
\newenvironment{enumeratei}{\begin{enumerate}[\upshape (i)]}{\end{enumerate}}
\numberwithin{equation}{section}
\theoremstyle{plain}
 \newtheorem{theorem}{Theorem}[section]
 \newtheorem{lemma}[theorem]{Lemma}
 \newtheorem{observation}[theorem]{Observation}
\theoremstyle{definition}
 \newtheorem{remark}[theorem]{Remark}
\theoremstyle{remark}
 \newtheorem{case}{Case}
\newcommand \notdiv {\mathrel{\not{\kern -0.27pt|}} }
\newcommand \Nplus {\mathbb N^+}
\newcommand \Nnul {\mathbb N_0}
\newcommand \ffree[1] {\textup{FL}(3)}
\newcommand \FL[1] {\textup{FL}(#1)}
\newcommand \fmth {\textup{FM}(3)}
\newcommand \Ats[1] {\textup {At}({#1})}
\newcommand \con [1] {\textup{con}(#1)}
\newcommand \freelat [1] {\textup{FL}\bigl(#1\bigr)}
\newcommand \vxi {\vec \xi\,}
\newcommand \blokk [2] {#1/#2}
\newcommand \lvar [1] {\mathcal #1}
\newcommand \Ngen [1] {\lvar L_{#1\textup{-gen}}}
\newcommand \ideal{\mathord{\downarrow}}
\newcommand \filter{\mathord{\uparrow}}
\newcommand \lideal[1]{\mathord{\downarrow}_{\kern-1pt#1}}
\newcommand \lfilter[1]{\mathord{\uparrow}_{\kern-1pt#1}}
\newcommand \uu {\widetilde u}
\newcommand \vv {\widetilde v}
\newcommand \ww {\widetilde w}
\newcommand \tbf [1] {\textbf{#1}} 
\newcommand \set[1] {\{#1\}}
\renewcommand \phi {\varphi}
\newcommand \red [1] {{\color{red}#1\color{black}}}
\newcommand \nothing [1] {}
\newcommand \magenta [1] {{\color{magenta}#1\color{black}}}
\begin{document}
\title[On atoms in three-generated lattices] 
{On the number of atoms in three-generated lattices}

\author[G.\ Cz\'edli]{G\'abor Cz\'edli}
\email{czedli@math.u-szeged.hu}
\urladdr{http://www.math.u-szeged.hu/\textasciitilde{}czedli/}
\address{University of Szeged, Bolyai Institute, 
Szeged, Aradi v\'ertan\'uk tere 1, HUNGARY 6720}

\thanks{This research was supported by the Hungarian Research, Development and Innovation Office under
grant number KH 126581.}

\dedicatory{Dedicated to the memory of Professor Gyula Pap $(\kern-1pt$1954--2019$\kern1pt)$
}

\date{\hfill {\tiny{\magenta{(\tbf{Always} check the author's website for possible updates!) }}}\  \red{\datum}}

\subjclass[2000] {06B99}
\keywords{Three-generated lattice,  number of atoms, coatom, atomless lattice, herringbone lattice, $n$-distributive lattice, 2-distributive lattice, non-finitely presented lattice, convex geometry, meet-distributive lattice, semidistributive lattice, semimodular lattice}

\begin{abstract} 
As the main achievement of the paper, we construct a three-generated, 2-distributive,  atomless lattice that is not finitely presented. Also, the paper contains the following three observations. First, every coatomless three-generated lattice has at least one atom. Second, we give some sufficient conditions implying that a three-generated lattice has at most three atoms. Third, we present a three-generated meet-distributive lattice with four atoms.
\end{abstract}

\maketitle
\section{Result and introduction}\label{secgoal}
Our main goal is to prove the following theorem; the corresponding  (widely known) definitions are postponed to Section~\ref{sectdefintro}.

\begin{theorem}\label{thmmain} There exists a three-generated 
lattice $L$ such that  
\begin{enumeratei}
\item\label{thmmaina} $L$ has no atom,
\item\label{thmmainb} $L$ is $2$-distributive, and
\item\label{thmmainc} $L$ is not finitely presented.
\end{enumeratei}
\end{theorem}

In the proof of this theorem, a  three-generated lattice $L$ satisfying \eqref{thmmaina}--\eqref{thmmainc} will concretely be constructed. 
Also, we are going to verify two observations. 

\begin{observation}\label{observtr} Let $L$ be a lattice generated by a three-element subset $\set{a_0,a_1,a_2}$. 
\begin{enumeratei}
\item\label{observtra}
If this three-generated lattice has no coatom, then it has at least one atom.
\item\label{observtrb} With the notation
$k:=|\set{(i,j)\in  
\set{(0,1), (0,2),(1,2)}: a_i\wedge a_j\neq a_0\wedge a_1\wedge a_2}|$, if $k\in\set{2,3}$, then $L$ has exactly $k$ atoms.
\item\label{observtrc} If $L$ is modular, then $L$ has at most three atoms and it has at least one.
\end{enumeratei}
\end{observation}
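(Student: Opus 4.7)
Part~(\ref{observtra}) is immediate from Theorem~\ref{thmmain}: since $n = 3$ does not satisfy $n \geq 4$, the equivalence (\ref{thmmaina})$\Leftrightarrow$(\ref{thmmainc}) rules out the existence of a three-generated lattice with neither an atom nor a coatom; hence, if $L$ has no coatom, then $L$ has at least one atom.

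For part~(\ref{observtrb}), set $m := a_0 \wedge a_1 \wedge a_2$ and $p_{ij} := a_i \wedge a_j$. The first step is to observe that $m$ is the bottom of $L$: every element of $L$ is a lattice term in $a_0,a_1,a_2$, and any such term is $\geq m$ by induction on term complexity, since $m \leq a_i$ for each $i$ and both operations preserve the property ``$\geq m$''. Next, for two distinct pairs $\set{i,j}, \set{i',j'} \subseteq \set{0,1,2}$, their union is $\set{0,1,2}$, so $p_{ij} \wedge p_{i'j'} = m$; consequently, the nontrivial $p_{ij}$'s (those $> m$) are pairwise meet-disjoint and hence mutually distinct. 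The crux is to show, under the hypothesis $k \in \set{2,3}$, that the atoms of $L$ are precisely these nontrivial $p_{ij}$'s. For one direction, if $t \in L$ satisfies $m < t \leq p_{ij}$, then $t \leq a_i$, $t \leq a_j$, and $t \wedge a_\ell = m$ for the remaining index $\ell$; a careful term-level analysis of $t$ in $a_0,a_1,a_2$, using the constraints the hypothesis on $k$ imposes on such terms, forces $t = p_{ij}$, so $p_{ij}$ is an atom. For the other direction, any atom $c$ of $L$ is classified by $S(c) := \set{i : c \leq a_i}$ with $|S(c)| \in \set{0,1,2}$ (as $|S(c)|=3$ would force $c \leq m$); when $|S(c)| \geq 2$, one has $c \leq p_{ij}$ and hence $c = p_{ij}$ by the previous step, while the cases $|S(c)| \in \set{0,1}$ are ruled out by analogous term-structural arguments invoking the hypothesis on $k$.

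For part~(\ref{observtrc}), assuming $L$ modular, $L$ is a homomorphic image of the free modular lattice $\fmth$, a well-known finite 28-element lattice; hence $L$ is finite, and being a nontrivial finite lattice it has at least one atom. For the bound of at most three atoms, I would examine the possible congruences of $\fmth$: the atoms of $\fmth$ itself are exactly $p_{01},p_{02},p_{12}$, and the atoms of a quotient $L = \fmth/\Theta$ arise either as the images $[p_{ij}]_\Theta$ (when $[p_{ij}]_\Theta > m$) or, in cases where some of the $p_{ij}$'s collapse to the bottom, as images of elements one level up (such as $[a_i \wedge (a_j \vee a_\ell)]_\Theta$). A direct inspection of the 28-element structure of $\fmth$ confirms that, in every quotient, the total atom count stays at most three.

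The main obstacle I anticipate is the detailed term-level analysis in~(\ref{observtrb}), particularly the exclusion of ``rogue atoms'' with $|S(c)| \leq 1$ using only the hypothesis on $k$, together with the careful quotient bookkeeping needed in~(\ref{observtrc}) when one or more of the $p_{ij}$'s collapses to $m$ and atoms of the quotient must be tracked from higher levels of $\fmth$.
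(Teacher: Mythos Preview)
Your argument for part~(\ref{observtra}) is circular in the context of the paper: the implication (\ref{thmmaina})$\Rightarrow$(\ref{thmmainc}) of Theorem~\ref{thmmain} is itself derived \emph{from} Observation~\ref{observtr}(\ref{observtra}) (see the last paragraph of the proof of the theorem). The paper instead proves~(\ref{observtra}) directly: by the dual of Lemma~\ref{lemmadzghrwtNs}(\ref{lemmadzghrwtNsb}), the absence of coatoms forces all pairwise joins to equal $1$; then either all pairwise meets equal $0$ (and $L\cong M_3$, which has three atoms), or some $a_i\wedge a_j\neq 0$ is an atom by Lemma~\ref{lemmadzghrwtNs}(\ref{lemmadzghrwtNsb}).

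For part~(\ref{observtrb}) your outline has a genuine gap. The phrases ``careful term-level analysis'' and ``analogous term-structural arguments'' are placeholders, not arguments, and it is unclear how induction on term complexity alone would exclude an atom $c$ with $|S(c)|\leq 1$. The paper avoids any term analysis by invoking Lemma~\ref{lemmadzghrwtNs}(\ref{lemmadzghrwtNsa}): whenever $a_i\wedge a_j\neq 0$, the lattice is the \emph{disjoint union} $\filter(a_i\wedge a_j)\cup\ideal a_k$. An atom $b$ distinct from $a_i\wedge a_j$ cannot lie in $\filter(a_i\wedge a_j)$ (else $a_i\wedge a_j<b$, impossible), so $b\in\ideal a_k$. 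Applying this with two of the nonzero pairwise meets immediately forces $b\leq a_k\wedge a_{k'}$, which is either $0$ (contradiction, $k=2$) or the third atom ($k=3$). This partition lemma is the missing idea.

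For part~(\ref{observtrc}) your strategy via $\fmth$ is the right one, but ``direct inspection confirms'' is not yet a proof. The paper sharpens the bookkeeping by first using part~(\ref{observtrb}) to reduce to $k\leq 1$, then passing to the quotient $T=\fmth/\Theta$ with $\Theta$ collapsing two of the pairwise meets; the remaining case split on which further elements collapse produces quotients $T'$, $T''$ with no four-element antichain, which bounds the atom count in any further quotient.
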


Note that \eqref{observtra} above is a particular case of Freese~\cite[equation (10)]{freese89}; see also  Freese and Nation~\cite[Theorem 2-7.2]{fn}. Postponing the definitions to Section~\ref{sectdefintro} again, we formulate the second observation as follows. 

\begin{observation}\label{observnD}\ 
There exists a twelve-element three-generated meet-distributive lattice with exactly four atoms. 
\end{observation}

\subsection*{Outline}
Section~\ref{sectdefintro} gives some basic definitions and recalls some facts motivating the present paper. In Section~\ref{sectproofs}, we prove Theorem~\ref{thmmain}; Remark~\ref{remhrrBnNl} on the herringbone lattice and  Lemma~\ref{lemmadzghrwtNs} of this section can be of separate interest. Finally, Observations~\ref{observtr} and \ref{observnD} are proved in Section~\ref{sectobsproof}.

\section{Basic concepts, motivation, and some related results}\label{sectdefintro}
A sublattice $S$ of a lattice $L$ is  \emph{proper} if $S\neq L$. A  lattice $L$ is 
\emph{three-generated} if it has a three-element subset $\set{a_1,a_2,a_3}$  such that  $\set{a_1,a_2,a_3}\subseteq S$ holds for no proper sublattice $S$ of $L$. For an element $a$ in a lattice $L$, the  \emph{principal ideal} $\set{x\in L: x\leq a}$ and the \emph{principal filter} $\set{x\in L: a\leq x}$ will be denoted by $\ideal a$ and $\filter a$, respectively. Note that $a$ is an \emph{atom} (of $L$) iff $|\ideal a|=2$, and it is a \emph{coatom} iff $|\filter a|=2$. Let $\Ats L$  stand for the set of atoms of $L$.

The class $\Ngen 3$ of three-generated lattices is quite large and involved. For example, this class contains $2^{\aleph_0}$ many non-isomorphic members and 
every lattice $L$ of size at most $\aleph_0$ is a sublattice of a lattice in $\Ngen 3$; see Crawley and Dean~\cite[Theorem 7]{crawleydean}. As a related result, it was proved in 
Cz\'edli~\cite[Corollary 1.3]{czgembin3} that every finite lattice can be embedded in a \emph{finite} member of $\Ngen 3$. However, for each three-generated lattice $L$ that the author has ever seen in the literature, including  Cz\'edli~\cite{czgembin3}, Davey and Rival~\cite{daveyrival}, Freese, Je\v zek, and Nation~\cite{fjn}, Gr\"atzer~\cite{gratzer}, and Poguntke~\cite{poguntke}, we have that  $|\Ats L|\in\set{1,2,3}$. Now, from Theorem~\ref{thmmain}\eqref{thmmaina} and Observation~\ref{observnD}, we learn that $|\Ats L|=0$ and $|\Ats L|=4$ are also possible. 

We know more about the atoms of \emph{four-generated} lattices than those of the three-generated ones. Four-generated lattices can have very many atoms; without seeking completeness, we only list some relevant results and facts below. Finite equivalence lattices have many atoms and these lattices are four-generated by  Strietz~\cite{strietz}; see also Z\'adori~\cite{zadori4g} for a nice proof. The lattices of quasiorders over finite base sets $A$ with $|A|>10$ are also four-generated and have many atoms by Cz\'edli~\cite{czgquo4gen} and Cz\'edli and Kulin~\cite{czgkulin}, and there are also analogous results 
over infinite base sets in  \cite{czgquo4gen}, \cite{czgkulin}, and  Cz\'edli~\cite{czedlifourgen,czedlioneonetwo}. There are modular examples as well since the subspace lattice $L(n,F)$ of an $n$-dimensional vector space over a prime field $F$ is four-generated for every integer $n\geq 3$ by Gelfand and Ponomarev~\cite{gelfandponom}; see also Z\'adori~\cite{zadori5gen} for an analogous result and an overview. Two particular cases are worth mentioning about these four-generated lattices:  if $F=\mathbb Q$, the field of rational numbers, then $L(n,F)$ has $\aleph_0$-many atoms while  if $n=3$, then  $L(n,F)$ is generated by four of its \emph{atoms} by Herrmann and Huhn~\cite{herrmannhuhn}. As one would expect, there are four-generated lattices without atoms. In view of Observation~\ref{observtr}\eqref{observtra}, 
the following result proved by
Freese~\cite[Section 6]{freese89}, see also
Freese and Nation~\cite[Theorem 2-7.5]{fn},  is worth mentioning: there exists a four-generated lattice that has no two-element interval at all; clearly, this lattice is atomless and coatomless.

The theory of meet-distributive lattices  goes back to
Dilworth~\cite{dilworth}; see also 
Adaricheva, Gorbunov, and  Tumanov~\cite{adarichevaatal},
Edelman~\cite{edelman}, Edelman and Jamison~\cite{edelmanjamison}, and other papers referenced by \cite{czgcoord}. These lattices are the lattice theoretical counterparts of abstract convex geometries. By definition, a finite lattice $L$ is \emph{meet-distributive} if for each 
$x\in L$, there is a \emph{unique} minimal set $Y$ of join-irreducible elements such that $x=\bigvee\set{y: y\in Y}$. Many other definitions are listed in Monjardet~\cite{monjardet}. A survey and  some more definitions are given Cz\'edli~\cite{czgcoord};  see  Lemma 7.4 and  the dual of Proposition 2.1 there. Yet another description of these lattices is provided by the dual of Proposition 6.1 of Adaricheva and Cz\'edli~\cite{adariczg}.

The concept of $n$-distributive lattices was introduced by Huhn~\cite{huhnschwacha,huhnschwachb}. Due to its links to von Neumann's coordinatization theory, see Herrmann and Huhn~\cite{herrmannhuhn}, to convex geometry, see Huhn~\cite{huhnconv} and Libkin~\cite{libkin}, and to various questions in lattice theory, see, for example, Huhn~\cite{huhnautom}, this concept soon became important in lattice theory. While 1-distributive lattices are the usual distributive ones and well studied, 2-distributive ones are of special importance; see, for example, J\'onsson and Nation~\cite{jonssonnation}. Here we only define 2-distributivity; a lattice $L$ is \emph{$2$-distributive} if 
\begin{equation}
x\wedge (y_0\vee y_1\vee y_2)\leq \bigl( x\wedge (y_0\vee y_1)\bigr) \vee \bigl( x\wedge (y_0\vee y_2)\bigr) \vee \bigl( x\wedge (y_1\vee y_2)\bigr)
\end{equation}
holds for all $x,y_0,y_1,y_2\in L$.

A lattice $L$ is \emph{finitely presented} if there is a positive integer $n$ and there are finitely many $n$-ary lattice terms $f_1',f_1'',\dots, f_t',f_t''$ such that $L$ is isomorphic to 
\begin{equation}
\freelat{x_1,x_2,\dots,x_n}/\Theta, \text{ where } \Theta:=\bigvee_{i=1}^t\con{f_i'(\vec x),f_i''(\vec x)},
\label{eqfLPrshcLt}
\end{equation}
$\freelat{x_1,x_2,\dots,x_n}=:\FL n$ is the lattice freely generated by the $n$-element set $\set{x_1,x_2,\dots,x_n}$ in the variety of all lattices, $\vec x$ abbreviates $(x_1,x_2,\dots,x_n)$,  $\con{u,v}$ denotes the least congruence collapsing $u$ and $v$, and the join is taken in the lattice of all congruences of $\FL n$. In other words, quotient lattices of finitely generated free lattices modulo finitely generated congruences are said to be finitely presented.  Our standard notation for the lattice in  \eqref{eqfLPrshcLt} is 
\begin{equation}
\freelat{u_1,\dots,u_n: \,f_1'(\vec u)=f_1''(\vec u),\dots, f_t'(\vec u)=f_t''(\vec u)};
\label{eqdNJssKlPzF}
\end{equation}
here $u_i$ is $x_i/\Theta$, $\vec u=(u_1,\dots,u_n)$,  and the lattice is generated by $\set{u_1,\dots, u_n}$. Note that every finite lattice is finitely presented. 
Usually, being finitely presented is considered a positive property. In case of finitely generated \emph{infinite} lattices, it is the lack of  this property that we consider positive in this paper, because we feel that taking an infinite join in \eqref{eqfLPrshcLt} allows us to encode more information in $\Theta$ 
and to obtain a more structured and less complicated $\freelat{x_1,\dots,x_n}/\Theta$ in many cases.
%

\section{Proving the main result}\label{sectproofs}
In this section, we prove our main result, Theorem~\ref{thmmain}. Also, this section contains Remark~\ref{remhrrBnNl} and Lemma~\ref{lemmadzghrwtNs}, which can be of separate interest.

\begin{proof}[Proof of Theorem~\ref{thmmain}] 
Let $H$ be the \emph{herringbone lattice}  in the middle of Figure~\ref{figrc}. This lattice has
played important roles in several papers including
Bauer and Poguntke~\cite{bauerpoguntke}, Poguntke~\cite[Figure 10]{poguntke}, Poguntke and Sands~\cite{poguntkesands}, Rival, Ruckelshausen, and Sands~\cite{rivalatal}, Rolf~\cite{rolf}, and Wille~\cite{wille}.
We know from these papers that 
\begin{equation}
\text{$H$ is a three-generated  lattice and it  is generated by $\set{a,b,c}$,}
\label{eqtxtHthRgltNs} 
\end{equation}
that is, by the black-filled elements in the figure;  for later reference, we are going to prove this fact below in few lines. Let $\vxi:=(x,y,z)$. For each $u\in H$, we are going to define a ternary term $g_u(\vxi)$ by induction with the  purpose that 
\begin{equation}
g_u(a,b,c)=u\, \text{ should hold in }H\text{ for all }u\in H.
\label{eqhsHdlKrSnFr}
\end{equation}
So, with $i\in \Nplus:=\set{1,2,3,\dots}$, we let
\begin{align}
&g_{a_0}(\vxi)=g_{a}(\vxi):=x,  &&g_b(\vxi):=y, 
\label{eqhzGfHKmma}
\\
&g_{c_1}(\vxi)=g_c(\vxi):=z, &&g_{q_0}(\vxi):= g_{a}(\vxi)\vee g_b(\vxi),
\label{eqhzGfHKmmb}\\ 
&g_0(\vxi):=g_{a}(\vxi)\wedge g_b(\vxi)\wedge g_c(\vxi),&& g_{q_1}(\vxi):=g_b(\vxi)\vee g_c(\vxi),
\label{eqhzGfHKmmc}\\
&g_{a_{2i}}(\vxi):=g_{a}(\vxi)\wedge g_{q_{2i-1}}(\vxi),&&
g_{q_{2i}}(\vxi):=g_{a_{2i}}(\vxi)\vee g_b(\vxi),
\label{eqhzGfHKmmd}\\
&g_{c_{2i+1}}(\vxi):=g_{c}(\vxi)\wedge g_{q_{2i}}(\vxi),&&
g_{q_{2i+1}}(\vxi):=g_{c_{2i+1}}(\vxi)\vee g_b(\vxi).
\label{eqhzGfHKmme}
\end{align}
It is clear by Figure~\ref{figrc} that \eqref{eqhsHdlKrSnFr} holds, whereby $H$ is indeed generated by $\set{a,b,c}$.

In the direct square $H\times H$, after letting
$\uu=(a,b)$, $\vv=(b,a)$, and $\ww=(c,c)$, 
\begin{equation}
\text{we define $L$ as the sublattice generated by $\set{\uu,\vv,\ww}$.}
\label{eqtxtLdef}
\end{equation} 
Clearly, $L$ is a three-generated lattice by its definition; we are going to prove that it has all the required properties.

\begin{figure}[ht]
\centerline
{\includegraphics[scale=0.9]{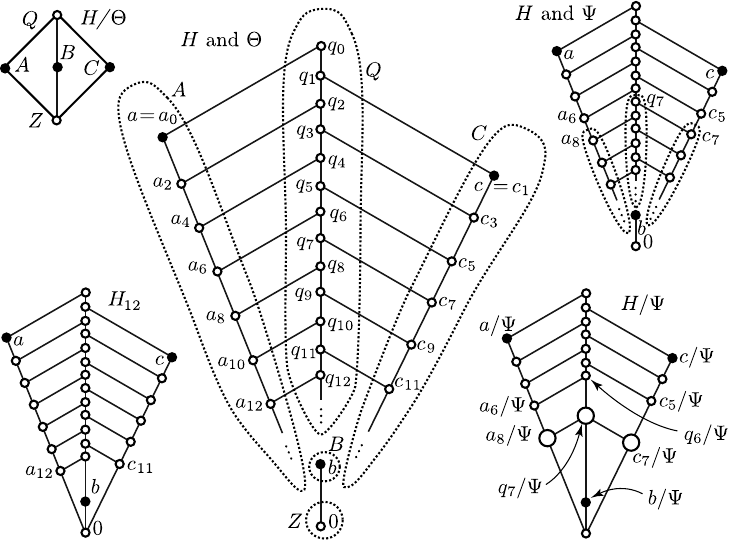}}
\caption{The herringbone lattice $H$ and $\Theta$ in the middle;  $H/\Theta$ and  $H_{12}$ on the left;  $H$, $\Psi$, and $H/\Psi$ on the right}
\label{figrc}
\end{figure}

In order to show that $L$ has no atom, first we show that 
\begin{equation}\left.
\parbox{8.2cm}{if $(x_1,x_2)\in L$ such that either $x_1\in H\setminus\set 0$ and $x_1$ is not an atom in $H$, or $x_2\in H\setminus\set 0$ and $x_2$ is not an atom in $H$,  then $(x_1,x_2)$ is not an atom in $L$.}\,\,\,
\right\}
\label{pbxSchRszLl}
\end{equation}
By symmetry, it suffices to deal with the case when the premise of \eqref{pbxSchRszLl} stipulates a condition on $x_1$.  So we assume that  $x_1\in H\setminus\set 0$ is not an atom in $H$, and pick an element $d\in H$ such that $0<d<x_1$. Then there is a ternary lattice term $t$ such that $d=t(a,b,c)$. Since
\begin{align*}
(x_1,x_2)\wedge t(\uu,\vv,\ww)&=(x_1,x_2)\wedge (t(a,b,c),t(b,a,c)) \cr
&=(x_1\wedge t(a,b,c),x_2\wedge t(b,a,c)) = (d,x_2\wedge t(b,a,c))
\end{align*}
is strictly less than $(x_1,x_2)$ and it is not $(0,0)$,
we conclude that $(x_1,x_2)$ is not an atom in $L$. This proves \eqref{pbxSchRszLl}. 

Next, we consider the equivalence relation $\Theta$ on $H$ whose blocks are $A=\set{a=a_0,a_2,a_4,a_6,\dots}$, $B=\set{b}$,
$C=\set{c=c_1,c_3,c_5,\dots}$, $Q=\set{q_0,q_1,q_2,q_3,\dots}$ and $Z=\set0$, as it is indicated by dotted ovals in the middle of Figure~\ref{figrc}. 
Clearly, $\Theta$ is a congruence and the quotient lattice $H/\Theta$ is $M_3=\set{Z,A,B,C,Q}$; see at the top left of Figure~\ref{figrc}. Let $\phi\colon H\to M_3$ denote the natural projection, that is, $\phi(x)=A$ iff $x\in A$, $\phi(x)=B$ iff $x\in B$, \dots, $\phi(x)=Q$ iff $x\in Q$. 
We claim that for every ternary lattice term $t$,
\begin{equation}\left.
\parbox{6.6cm}{$t(a,b,c)= b$ implies that $t(b,a,c)\in A$, and  $t(b,a,c)= b$ implies that $t(a,b,c)\in A$.}\,\,
\right\}
\label{eqtxtchScRWl}
\end{equation}
In order to show this, let $\psi\colon M_3\to M_3$ denote the unique automorphism of $M_3$ such that $\psi(A)=B$, $\psi(B)=A$, and $\psi(C)=C$. Assume that $t(a,b,c)=b$. Applying $\phi$ to this equality, we obtain that $t(A,B,C)=t(\phi(a),\phi(b),\phi(c))=\phi(t(a,b,c))=\phi(b)=B$. Hence, using that both $\phi$ and $\psi$ commute with $t$ and that $u\in\phi(u)$ for every $u\in H$ (by the definition of $\phi$), we can compute as follows. \begin{align*}
t(b,a,c)\in \phi(t(b,a,c))= t(\phi(b),\phi(a),\phi(c))=
t(B,A,C)\cr
=t(\psi(A),\psi(B),\psi(C))=\psi(t(A,B,C))=\psi(B)=A.
\end{align*}
This proves the first half of \eqref{eqtxtchScRWl}. The second half follows similarly. Alternatively, the second half follows immediately by applying the first half to the auxiliary ternary term $\widehat t(x,y,z):=t(y,x,z)$. Therefore,  \eqref{eqtxtchScRWl} holds.

Next, for the sake of contradiction, suppose that $L$ has an atom $(x_1,x_2)$. For an appropriate ternary lattice term $t$, we have that $(x_1,x_2)=t(\uu,\vv,\ww)$, that is,
\begin{equation}
x_1=t(a,b,c)\quad\text{ and }\quad x_2=t(b,a,c).
\label{eqshrtRmmMckrM}
\end{equation}
Since $(x_1,x_2)$ is an atom, at least one of $x_1$ and $x_2$ is nonzero. First, assume that $x_2\neq 0$. Then  \eqref{pbxSchRszLl} yields that $x_2$ is an atom in $H$. Since $b$ is the only atom of $H$, we obtain that $t(b,a,c)=b$. 
Hence, the second half of \eqref{eqtxtchScRWl} implies that $x_1=t(a,b,c)\in A$. Using that $0\notin A$,  \eqref{pbxSchRszLl} gives that $x_1$ is an atom in $H$. This is a contradiction since $A$, being an infinite descending chain, contains no atom of $H$. Second, the assumption $x_1\neq 0$ leads to the same contradiction similarly; the only difference is that now the first half of  \eqref{eqtxtchScRWl} is needed. We have shown that part \eqref{thmmaina} of Theorem~\ref{thmmain} holds, that is,  $L$ has no atom.

We say that a lattice $K=(K;\vee,\wedge)$ is of \emph{breadth at most 2} if for every nonempty  subset $X$ of $K$, there are $x_1,x_2\in X$ such that $\bigvee X=x_1\vee x_2$. It belongs to the folklore that 
\begin{equation}
\text{every planar lattice is of breadth at most 2.}
\label{eqtxtplLtBrdTht}
\end{equation}
Since we had no direct reference to this fact while writing Cz\'edli, Powers, and White~\cite[see (1.6) in it]{czgpowwhite}, we presented a proof of \eqref{eqtxtplLtBrdTht} there. A shorter proof can be obtained by combining  Lemma 3.12(B,C) (cited from Kelly and Rival~\cite{kellyrival}) and  Proposition 3.13(A) of Cz\'edli~\cite{czgfgcircles}.
Note at this point that a planar lattice is \emph{finite} by definition.
We claim that the herringbone lattice $H$ is of breadth at most 2. 
Clearly, this property only depends on the join-semilattice reduct $(H;\vee)$ of $H=(H;\vee,\wedge)$. 
For a positive  integer $m$, let 
\begin{equation}
\begin{aligned}
H_m:=&\set{a_i: 0\leq i\leq m\text{ and } 2\mid i} \cup \set{q_i: 0\leq i\leq m}\cr
&\cup \set{c_i: 0\leq i\leq m\text{ and } 2\notdiv i} \cup \set{0,b}.
\end{aligned}
\label{eqHndfLstkW}
\end{equation}
For $m=12$, $H_m$ is given on the right of Figure~\ref{figrc}. Clearly, $H_m$ is a join subsemilattice of $(H;\vee)$ and 
$H$ is the (directed) union of these subsemilattices. Hence, to show that $(H;\vee)$ is of breadth at most 2, it suffices to show that so are the $(H_m;\vee)$ for all integers $m\geq 1$. But this holds by 
\eqref{eqtxtplLtBrdTht}, and we conclude that the lattice $H=(H;\vee,\wedge)$ is of breadth at most 2. This property of $H$ trivially implies that 
\begin{equation}
\text{$H$ is 2-distributive.}
\label{eqtxthTsdsrbv}
\end{equation}
Since lattice identities are preserved by forming direct squares and taking sublattices, we conclude that $L$ is 2-distributive, proving  part \eqref{thmmainb} of Theorem~\ref{thmmain}.

Next we recall a part of Corollary 3.2 from Mair and  Ru\v skuc~\cite{mairruskuc}; we omit the middle sentence from this corollary and we give a concise formulation. If an algebra $C$ is a subdirect product of algebras $A$ and $B$, then $C$ is a subalgebra of $A\times B$ and the restrictions of the projections $A\times B\to A$, defined by $(x,y)\mapsto x$, and $A\times B\to B$ to $C$, defined by $(x,y)\mapsto y$,  will be denoted by $\pi_A$ and $\pi_B$, respectively.
Note that  \eqref{eqpbxRuskucMair} below tailors a condition on $\pi_B(\ker(\pi_A)\vee \ker(\pi_B))$, but we will not have to understand what this congruence means when \eqref{eqpbxRuskucMair} is applied to our situation. 
\begin{equation}\left.
\parbox{8.3cm}{Assume that $C$ is a subdirect product of $A$ and $B$ in a congruence modular variety, $C$ is finitely presented, and 
the congruence $\pi_B(\ker(\pi_A)\vee \ker(\pi_B))$ of $B$ is finitely generated. Then $A$ is finitely presented.}\,\,\,
\right\}
\label{eqpbxRuskucMair}
\end{equation}
As it is clear from \eqref{eqpbxRuskucMair} and from the rest of  Mair and  Ru\v skuc~\cite{mairruskuc}, the connection between the finite presentability of subdirect products and that of their subdirect factors is more complicated than we could, possibly, expect. This is so even if direct products rather than subdirect ones are considered; see Mair and  Ru\v skuc~\cite{oldmairruskuc}.

In order to make \eqref{eqpbxRuskucMair} applicable for our purpose, we are going to prove the following two statements:
\begin{align}
&\text{The herringbone lattice $H$ is not finitely presented, }
\label{eqtxthbnFnpRsd}\\
&\text{and every congruence of $H$ is finitely generated.}
\label{eqtxdhRrngfGcnRcs}
\end{align}

For the sake of contradiction, suppose that \eqref{eqtxthbnFnpRsd} fails. This means that $H$ is finitely presented, whence it is of the form
\begin{equation}
H=\freelat{u_1,\dots,u_n: \,f_1'(\vec u)=f_1''(\vec u),\dots, f_t'(\vec u)=f_t''(\vec u)},
\label{eqdjMbnBHglrSg}
\end{equation}
where $n,t\in \Nplus$,  $\vec u=(u_1,\dots, u_n)$, and the $f_i'$ and $f_i''$ are $n$-ary lattice terms; see \eqref{eqdNJssKlPzF} for more details about this notation. 
Since $H$ is  generated by $\set{u_1,\dots, u_n}$, there are $n$-ary lattice terms $h_a$, $h_b$, and $h_c$ such that 
\begin{equation}
h_a(\vec u)=a,\quad h_b(\vec u)=b,\quad\text{and}\quad h_c(\vec u)=c\quad\text{hold in }\,H.
\label{eqHvsVlgcjSWrnpMk}
\end{equation}
Next, we are going to use $H_m$ defined in \eqref{eqHndfLstkW} for each $m\in\Nplus$. Note that  $H_m$ is join-subsemilattice but not a sublattice of $H$; however, $H_m$ happens to be a lattice with respect to the ordering inherited from $H$. It is straightforward to see that
\begin{equation}
\set{p\wedge q: p,q\in H_m} \cup \set{p\vee q: p,q\in H_m}
\subseteq H_{m+1}
\label{eqxhWrvzSsWnknSkHp}
\end{equation}
holds for every $m\in\Nplus$.
Since  only finitely many elements $u_i$ and finitely many terms $f_j'$,  $f_j''$, $h_a$, $h_b$, and $h_c$ occur in \eqref{eqdjMbnBHglrSg}--\eqref{eqHvsVlgcjSWrnpMk}, and these terms contain only finitely many join and meet operation signs, it will soon follow from \eqref{eqxhWrvzSsWnknSkHp}
that we can choose an integer $m\in\Nplus$ such that 
\begin{equation}\left.
\parbox{9.3cm}{$|H_m|\geq 10$, $\set{a,b,c,u_1,\dots, u_n}\subseteq H_m$ and, for every $j\in\set{1,\dots, t}$, the equality $f_j'(\vec u)= f_j''(\vec u)$ holds in the lattice $H_m$ as well as the 
equalities $h_a(\vec u)=a$, $h_b(\vec u)=b$, and $h_c(\vec u)=c$.}\,\,\,
\right\}
\label{eqpbxshZggMbkm}
\end{equation}
Indeed, we can pick an $m_0$ such that $\set{a,b,c,u_1,\dots, u_n}\subseteq H_{m_0}$. Let, say $f_1'(\vec x)=\bigl((x_1\wedge x_2)\vee (x_3\wedge x_4)\bigr)\wedge x_5\wedge x_6$. (This is an example carrying the general idea satisfactorily.)
In the next few lines while we are proving \eqref{eqpbxshZggMbkm}, $\vee$ and $\wedge$ are understood in $H$. 
 With $m_1:=m_0+1$, it follows from \eqref{eqxhWrvzSsWnknSkHp} that $H_{m_1}$ contains $u_1\wedge u_2$ and $u_3\wedge u_4$. Since it is a join-subsemilattice of $H$, $H_{m_1}$ also contains 
$(u_1\wedge u_2)\vee(u_3\wedge u_4)$. In the next step,
with $m_2:=m_1+1$, we conclude by  \eqref{eqxhWrvzSsWnknSkHp} that $H_{m_2}$ contains
$\bigl((u_1\wedge u_2)\vee (u_3\vee u_4)\bigr)\wedge u_5$. 
In the next step, with $m_3:=m_2+1$, we obtain similarly that  $H_{m_2}$ contains
$\bigl((u_1\wedge u_2)\vee (u_3\vee u_4)\bigr)\wedge u_5\wedge u_6=f_1'(\vec u)$. We can proceed similarly by increasing the subscript of $H$ one by one, and finally we obtain a  subscript $m$ large enough such that 
all terms occurring in \eqref{eqpbxshZggMbkm} and their subterms behave in the same way in $H_m$ as in $H$. 
If $|H_m|\geq 10$ fails, then we can increase $m$. This proves \eqref{eqpbxshZggMbkm}.

\begin{figure}[ht]
\centerline
{\includegraphics[scale=1.0]{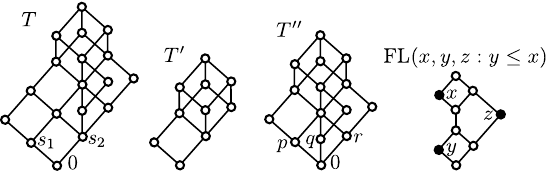}}
\caption{$T$, $T'$, $T''$, and $\freelat{x,y,z: y\leq x}$}
\label{figrb}
\end{figure}

Observe that as the inductive definition of the terms occurring in \eqref{eqhzGfHKmma}--\eqref{eqhzGfHKmme} proceeds, the subscripts of subscripts are increased one by one. Hence, $\set{a,b,c}$ generates the lattice $H_m$. Combining this fact with the last three equalities of \eqref{eqpbxshZggMbkm}, we obtain that $\set{u_1,\dots, u_n}$ also generates $H_m$. 
By  \eqref{eqpbxshZggMbkm}, $\set{u_1,\dots, u_n}$  is such a generating set of $H_m$ that satisfies the ``defining equalities'' $f_i'(\vec u)=f_i''(\vec u)$ occurring in \eqref{eqdNJssKlPzF}. Therefore, by von Dyck's theorem, $H_m$ is a homomorphic image of $H$. Hence, we can
\begin{equation}
\text{pick a congruence $\Psi$ of $H$ such that $H/\Psi\cong H_m$.}
\label{eqtxtpCktvGkltsn}
\end{equation} 
It is well known that the blocks of $\Psi$ (as well as those of any congruence) are convex sublattices; see, for example,
Gr\"atzer~\cite[Lemma 3.10]{gratzer}. Since $H_m$ is finite but $H$ is not, $\Psi$ has at least one non-singleton block. Using that each non-singleton interval of $H$ has a prime interval $[p,r]$ (that is, an edge $p\prec r$ in the diagram), it follows that $\Psi$ collapses an edge $[p,r]$.
There are three cases depending on the orientation of the edge $[p,r]$ in the middle of Figure~\ref{figrc}.

\begin{case}\label{caseone} We assume that $[p,r]$ is parallel to $[a_0,q_0]$.
Clearly, $[p,r]$ is up-perspective to $[a_0,q_0]$, that is, 
$a_0\wedge r=p$ and  $a_0\vee r=q_0$.  Since perspective intervals generate the same congruence and so they belong to the same congruences, $(a,q_0)=(a_0,q_0)\in \Psi$. This containment and
$b\leq q_0$ give that $b/\Psi\leq a/\Psi$.  Clearly, 
$H/\Psi$ is generated by $\set{a/\Psi,b/\Psi,c/\Psi}$ since $\set{a,b,c}$ generates $H$. Thus, $H/\Psi$ is a homomorphic image of the finitely presented lattice $\freelat{x,y,z: y\leq x}$, which consists of nine elements by, say, Gr\"atzer~\cite[Figure 6]{gratzer}; see also on the right of Figure~\ref{figrb}. This contradicts $|H/\Psi|=|H_m|\geq 10$ and excludes this case.
\end{case}

\begin{case}\label{casetwo} We assume that $[p,r]$ is parallel to $[c_1,q_1]$. Then, analogously to the previous case,
$b/\Psi\leq c/\Psi$ and so $10\geq |H_m|=|H/\Psi|\leq |\freelat{x,y,z: y\leq z}|=9$ is a contradiction excluding this case.
\end{case}

For later reference, let us summarize that
\begin{equation}\left.
\parbox{5.3cm}{if $\Psi$ collapses an edge parallel to $[a_0,q_0]$ or $[c_1,q_1]$, then $|H/\Psi|\leq 9$.}\,\,\,\right\}
\label{eqpbxdzGmttCbKsVrl}
\end{equation}

\begin{case}\label{casethree} We assume that no edge collapsed by $\Psi$ is parallel to $[a_0,q_0]$ or $[c_1,q_1]$. If  $[p,r]$ in on the (geometric) line through $a=a_0$ and $a_2$ or through $c=c_1$ and $c_3$, then $[p,r]$ is perspective to a vertical edge on the (vertical) line through $0$ and $q_0$, and this vertical edge is also collapsed by $\Psi$. So we can assume that $[p,r]$ is a vertical edge. We can also assume that $r$ is maximal (with respect to the lattice ordering). If we had that $r=b$, then 
$\Psi$ would collapse the edge $[0,b]=[p,r]$, whereby
$H_m\cong H/\Psi$ would be a homomorphic image of the five-element lattice $\freelat{x,y,z: y\leq x,\,\, y\leq z}$, contradicting $|H_m|\geq 10$. Hence, $[p,r]=[q_{k+1},q_k]$ for some $k\in\Nnul:=\Nplus\cup\set 0$. 
By a \emph{covering pentagon} of $H$ we mean a five-element nonmodular sublattice $\set{o,u,v,w,i}$ such that $o\prec u\prec  w\prec i$ and $o\prec v\prec i$; the covering relation is understood in $H$. A congruence is \emph{nonzero} if it is distinct from the equality relation. It is well known that every nonzero congruence of the pentagon collapses its \emph{monolith edge} $[u,w]$. Hence, whenever the restriction of $\Psi$ to a covering pentagon $\set{o,u,v,w,i}$ is nonzero, then $\Psi$ collapses the monolith edge $[u,w]$ of this pentagon. 
Clearly, if $j\in\Nplus$ is odd, then $\set{c_{j+2},q_{j+2},c_j,q_{j+1},q_j}$ is a covering pentagon with monolith edge $[q_{j+2},q_{j+1}]$. Similarly, if $j\in\Nnul$ is even, then $\set{a_{j+2},q_{j+2},a_j,q_{j+1},q_j}$ is a covering pentagon with monolith edge $[q_{j+2},q_{j+1}]$. Therefore, 
\begin{equation}
\text{for every }j\in\Nnul,  \text{ if }(q_{j},q_{j+1})\in \Psi, \text{ then }
(q_{j+1},q_{j+2})\in \Psi.
\label{eqszhBrlsSzmRTsz}
\end{equation}
It follows from \eqref{eqszhBrlsSzmRTsz} and by the maximality of $r=q_k$ that $\set{q_k, q_{k+1},q_{k+2}, \dots}$ is a block of $\Psi$. So are $\set{a_j: j\geq k\text{ and }j\text{ is even}}$ and  $\set{c_j: j\geq k\text{ and }j\text{ is odd}}$ because of perspectivities; see on the top right of Figure~\ref{figrc}, where $k=7$. Using that Cases~\ref{caseone} and \ref{casetwo} are now excluded as well as $(0,b)\in\Psi$ is, we obtain that the rest of the $\Psi$-blocks are singletons. Hence, it follows that 
$\set{0/\psi, b/\psi, a_{k+1}/\Psi, c_k/\Psi, q_k/\psi}$ is a covering $M_3$ sublattice of $H/\Psi$ if $k$ is odd; see the bottom right of Figure~\ref{figrc}. Similarly, 
$\set{0/\psi, b/\psi, a_{k}/\Psi, c_{k+1}/\Psi, q_k/\psi}$ is a covering $M_3$ sublattice if $k$ is even. Hence, regardless the parity of $k$,  $H/\Psi$ has three atoms such that any two of these atoms have the same join. Since $H_m$ fails to have this property, it cannot be isomorphic to $H/\Psi$. This contradicts 
\eqref{eqtxtpCktvGkltsn} and so Case~\ref{casethree} is excluded.
\end{case}
All the three cases have been excluded. Therefore, we are in the position to conclude  the validity of \eqref{eqtxthbnFnpRsd}.

While dealing with Case~\ref{casethree}, we saw that if a nonzero congruence $\Psi$ is in the scope of this case, then 
$H/\Psi$ is a homomorphic image of the five-element lattice $\freelat{x,y,z: y\leq x,\,\, y\leq z}$ or (up to isomorphism) it belongs to a family of finite lattices; one member of this family is given at the bottom right of Figure~\ref{figrc}. This fact together with \eqref{eqpbxdzGmttCbKsVrl} yield that for every nonzero congruence $\Psi$ of $H$,  the quotient lattice $H/\Psi$ is finite. In particular, then $H/\Psi$ has only finitely many congruences. Using the well-known Correspondence Theorem, see Theorem 6.20 in Burris and Sankappanavar~\cite{bursankap}, we obtain that for every nonzero congruence $\Psi$ of $H$, there are only finitely many congruences larger than $\Psi$. We are in the position to claim that 
\begin{equation}
\text{every congruence of $H$ is finitely generated.}
\label{eqtxtmDnsnGrGLftv}
\end{equation}
Indeed, let $\Psi$ be a congruence of $H$. Since the zero congruence is finitely generated, we can assume that $\Psi$ is nonzero. Pick a pair $(d_1,e_1)\in \Psi$ such that $d_1\neq e_1$. Then $\Psi_1$ is a finitely generated nonzero congruence and $\Psi_1\leq \Psi$. If $\Psi_1:=\con{d_1,e_1}<\Psi$, then pick a pair $(e_2,d_2)\in \Psi\setminus \Psi_1$. If $\Psi_2:=\Psi_1\vee \con{d_2,e_2}<\Psi$, then pick a pair $(e_3,d_3)\in \Psi\setminus \Psi_2$, and so on.
Since there are only finitely many congruences larger than the nonzero congruence $\Psi_1$, we cannot find infinitely many pairs $(d_i,e_i)\in \Psi\setminus\Psi_{i-1}$ in this way.  Hence, $\Psi=\Psi_{i-1}=\con{d_1,e_1}\vee\dots\vee \con{d_{i-1},e_{i-1}}$ for some $i$, proving \eqref{eqtxtmDnsnGrGLftv}.

Finally, lattices are congruence modular since they are even congruence distributive. This fact, \eqref{eqtxtmDnsnGrGLftv}, and the fact that $L$ is a subdirect product of $H$ with itself yield that \eqref{eqpbxRuskucMair} is applicable with $(L,H,H)$ playing the role of $(C,A,B)$.  For the sake of contradiction, suppose that $L$ is finitely presented. Then so is $H$ by
\eqref{eqpbxRuskucMair}. This contradicts \eqref{eqtxthbnFnpRsd} and proves  part \eqref{thmmainc} of the theorem. The proof of Theorem~\ref{thmmain} is complete.
\end{proof}

For possible later reference, we combine \eqref{eqtxtHthRgltNs},   \eqref{eqtxthTsdsrbv}, \eqref{eqtxthbnFnpRsd}, and \eqref{eqtxdhRrngfGcnRcs} as follows.

\begin{remark}\label{remhrrBnNl} The herringbone lattice $H$, see  Figure~\ref{figrc}, is $3$-generated, $2$-distributive, it is not finitely presented, and each of its congruence relations is finitely generated.
\end{remark}

%
%
%

For \emph{free} lattices, the following lemma has often been used; see, for example, Freese,  Je\v zek, and Nation~\cite{fjn} or Gr\"atzer~\cite{gratzer}. Here, we formulate it only for three-element generating sets. Having no reference for not necessarily free lattices, we present its easy proof.

\begin{lemma}\label{lemmadzghrwtNs} 
Let $L$ be a lattice generated by a three-element subset $\set{a_1,a_2,a_3}$. If $i$, $j$, and $k$ are subscripts such that $\set{i,j,k}=\set{1,2,3}$, then the following two assertions and their duals hold.
\begin{enumeratei}
\item\label{lemmadzghrwtNsa} If $a_i\wedge a_j\not\leq a_k$ or, equivalently, $a_i\wedge a_j\neq 0$, then $L$ is the disjoint union of $\filter(a_i\wedge a_j)$ and $\ideal a_k$.
\item\label{lemmadzghrwtNsb} If  $a_i\wedge a_j$ is distinct from $0$, then it is an atom of $L$. 
\end{enumeratei}
\end{lemma}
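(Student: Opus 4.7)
The plan is to start by observing that $a_1\wedge a_2\wedge a_3$ is the least element of $L$, and dually $a_1\vee a_2\vee a_3$ is its greatest. Indeed, since $L$ equals the set of all lattice terms in $\set{a_1,a_2,a_3}$, an easy induction on term construction shows that every element of $L$ lies above $a_1\wedge a_2\wedge a_3$: the base case is a generator, and if two values are above $a_1\wedge a_2\wedge a_3$ then so are their join and meet. I denote this bottom by $0$. With $0=a_i\wedge a_j\wedge a_k$, the equivalence stated in \eqref{lemmadzghrwtNsa} falls out at once: $a_i\wedge a_j\not\leq a_k$ iff $a_i\wedge a_j\neq a_i\wedge a_j\wedge a_k$ iff $a_i\wedge a_j\neq 0$.

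For the main body of \eqref{lemmadzghrwtNsa}, I set $S:=\filter(a_i\wedge a_j)\cup\ideal a_k$ and argue that $L=S$. The hypothesis $a_i\wedge a_j\not\leq a_k$ makes the union disjoint, because any element $x$ of the intersection would satisfy $a_i\wedge a_j\leq x\leq a_k$. To see $S=L$, since the three generators clearly all belong to $S$ (two of them lie in the filter and the third in the ideal), it suffices to check that $S$ is closed under $\wedge$ and $\vee$. A routine case split on where $x,y\in S$ live yields closure: if both lie in $\filter(a_i\wedge a_j)$ or both in $\ideal a_k$, closure is immediate from the definition of a principal filter or ideal; in the mixed case, the join lands above $a_i\wedge a_j$ while the meet lands below $a_k$, so both stay in $S$.

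Part \eqref{lemmadzghrwtNsb} then follows in one line from part \eqref{lemmadzghrwtNsa}: if $b<a_i\wedge a_j$ in $L$, then $b\notin\filter(a_i\wedge a_j)$, so $b\in\ideal a_k$ by the disjoint-union representation; combining $b\leq a_k$ with $b\leq a_i\wedge a_j$ gives $b\leq a_i\wedge a_j\wedge a_k=0$, whence $b=0$ and $a_i\wedge a_j$ covers $0$, i.e., it is an atom. The dual assertions are obtained by interchanging $\wedge$ with $\vee$, $\ideal$ with $\filter$, and $0$ with the dually-defined $1:=a_1\vee a_2\vee a_3$ throughout. I do not foresee any serious obstacle here; the only conceptual point worth flagging is that the symbol $0$ in the statement is legitimate precisely because a finitely generated lattice automatically possesses the meet of its generators as its bottom element, and this initial observation is what makes the rest of the argument essentially mechanical.
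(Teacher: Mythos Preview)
Your proof is correct and follows essentially the same route as the paper's: both establish the equivalence via $0=a_i\wedge a_j\wedge a_k$, verify that $\filter(a_i\wedge a_j)\cup\ideal a_k$ is a sublattice containing the generators (hence all of $L$), and deduce \eqref{lemmadzghrwtNsb} from \eqref{lemmadzghrwtNsa} by the same one-line argument. The only difference is that you spell out the induction showing $a_1\wedge a_2\wedge a_3$ is the bottom and the case split for sublattice closure, whereas the paper treats both as evident.
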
 

\begin{proof}
Since $0=a_i\wedge a_j\wedge a_k$, the condition  $a_i\wedge a_j\not\leq a_k$ is clearly equivalent to $a_i\wedge a_j\neq 0$. Assuming this condition, the filter $\filter(a_i\wedge a_j)$ and the ideal $\ideal a_k$ are obviously disjoint. It is also clear that their union is a sublattice. Since this sublattice contains $a_i$, $a_j$ and $a_k$, it equals $L$, proving \eqref{lemmadzghrwtNsa}. 
Next, for the sake of contradiction, we suppose that $a_i\wedge a_j\neq 0$ but  there is an element $d\in L$ such that $0<d<a_i\wedge a_j$. Since $d\notin \filter (a_i\wedge a_j)$, part \eqref{lemmadzghrwtNsa} implies that $d\in\ideal a_k$. However, then $0<d\leq a_i\wedge a_j\wedge a_k=0$, which is a contradiction.
\end{proof}

\section{The proofs of our observations}\label{sectobsproof}


\begin{proof}[Proof of Observation~\ref{observtr}]
Assume that $L$ has no coatom. Then, by (the dual of) Lemma~\ref{lemmadzghrwtNs}\eqref{lemmadzghrwtNsb}, $a_1\vee a_2=a_1\vee a_3=a_2\vee a_3=1$. There are two cases to consider. First, if 
\begin{equation}a_1\wedge a_2=a_1\wedge a_3=a_2\wedge a_3=0, \label{eqzghmnpwHzbq}
\end{equation}
then $L$ is isomorphic to the five-element non-distributive modular lattice $M_3$, and so $L$ has three atoms.
Second, if \eqref{eqzghmnpwHzbq} fails, then $L$ has an atom by Lemma~\ref{lemmadzghrwtNs}\eqref{lemmadzghrwtNsb}. 
This proves part \eqref{observtra} of Observation~\ref{observtr}.

Next, we deal with part \eqref{observtrb}; note 
that $a_0$, $a_1$, and $a_2$ are pairwise distinct. Assume that $k$ defined in part \eqref{observtrb} is at least $2$. 
Then $a_1\wedge a_2$, $a_1\wedge a_3$, and $a_2\wedge a_3$ are pairwise distinct since otherwise if, say, we had that $a_1\wedge a_2=a_1\wedge a_3$, then $a_1\wedge a_2=a_1\wedge a_3= (a_1\wedge a_2)\wedge (a_1\wedge a_3)= a_1\wedge a_2\wedge a_3=0$ would contradict $k\geq 2$. Hence, we have at least $k$ atoms by Lemma~\ref{lemmadzghrwtNs}\eqref{lemmadzghrwtNsb}, so it suffices to show that every atom is the form of $a_i\wedge a_j$ with $i\neq j$. Without loss of generality, we can assume that $k\geq 2$ is witnessed by $a_1\wedge a_2\neq 0\neq a_1\wedge a_3$. Clearly, $a_1\wedge a_2\not\leq a_3$ and $a_1\wedge a_3\not\leq a_2$.
Let $b\in L$ be an atom such that $b\neq a_1\wedge a_2$ and $b\neq a_1\wedge a_3$.  Then $b\notin \filter(a_1\wedge a_2)$ and $b\notin \filter(a_1\wedge a_3)$, because otherwise  $a_1\wedge a_2<b$ or $a_1\wedge a_3<b$,  contradicting the assumption that $b$ is an atom. Hence, Lemma~\ref{lemmadzghrwtNs}\eqref{lemmadzghrwtNsa} implies that $b\in \ideal a_3$ and $b\in \ideal a_2$. Consequently, $b\leq a_2\wedge a_3$. This is a contradiction if $k=2$, because then $a_2\wedge a_3=0$. Hence there are exactly $k$ atoms if $k=2$. If $k=3$, then $a_2\wedge a_3$ is an atom by  Lemma~\ref{lemmadzghrwtNs}\eqref{lemmadzghrwtNsb}, so $b\leq a_2\wedge a_3$ implies that the only atom  distinct from  $a_1\wedge a_2$ and $a_1\wedge a_3$ is the third atom, $a_2\wedge a_3=b$. Thus, we have exactly three atoms if $k=3$. This completes the argument for part \eqref{observtrb}.

Next, assume that $L$ is modular. 
With $k$ defined in part \eqref{observtrb} of Observation~\ref{observtr}, we can assume that $k\leq 1$, because part  \eqref{observtrb} takes care of the opposite case. Thus, without loss of generality, we can assume that $a_1\wedge a_2=0=a_2\wedge a_3$. The modular lattice freely generated by $\set{x,y,z}$ will be denoted by $\fmth$; see, for example, Birkhoff~\cite[page 64]{Birkhoff}, Crawley and Dilworth~\cite[Figure 17-1]{crawleydilworth}, or Gr\"atzer~\cite[page 84]{gratzer}. Note that $\fmth$ is easy to find on the Internet; see for example, McKeown~\cite{mckeown} for an animated version. We can extend $x\mapsto a_1$, $y\mapsto a_2$, $z\mapsto a_3$ to a surjective homomorphism $\fmth\to L$. Let $\Theta:=\con{0,x\wedge y}\vee\con{0,y\wedge z}$; it is a congruence on $\fmth$. By the 
Homomorphism Theorem and the Correspondence Theorem, see  Theorems 6.12 and 6.20 in Burris and Sankappanavar~\cite{bursankap}, $L$ is a quotient lattice $T/\Psi$  of the lattice $T:=\fmth/\Theta$, which is depicted in Figure~\ref{figrb}. 
If none of $(0,s_1)$ and $(0,s_2)$  belongs to the congruence $\Psi$, then $L=T/\Psi$ has exactly two atoms, $\blokk {s_1}\Psi$ and  $\blokk {s_2}\Psi$. 
If $(0,s_1)\in\Psi$, then $\Psi$ collapses the $M_3$ sublattice and (by the Correspondence Theorem again) $L$ is a quotient lattice of $T':=T/\con{0,s_1}$; see   Figure~\ref{figrb}. Since $T'$ has no four-element antichain, neither has $L$, whereby $L$ has at most three atoms. 
We are left with the case where $(0,s_2)\in\Psi$ but $(0,s_1)\notin\Psi$.  Then $L\cong T''/\Gamma$ where   $T'':=T/\con{0,s_2}$ and $\Gamma:=\Psi/\con{0,s_2}$; see   Figure~\ref{figrb} again.
None of the edges $[0,p]$, $[0,q]$ and $[0,r]$ is collapsed by $\Gamma$ since otherwise $\Gamma$ and $\Psi$ would collapse the $M_3$ sublattice of $T''$ and that of $T$, respectively, and so  $(0,s_1)$ would belong to $\Psi$. Hence, $L$ has exactly three atoms, $p/\Gamma$, $q/\Gamma$, and $r/\Gamma$.
Clearly, $L$ has at least one atom since it is a finite lattice; in fact, $|L|\leq |\fmth|=28$. This proves part \eqref{observtrc}.   and Observation~\ref{observtr}.
\end{proof}

\begin{figure}[ht]
\centerline
{\includegraphics[scale=0.9]{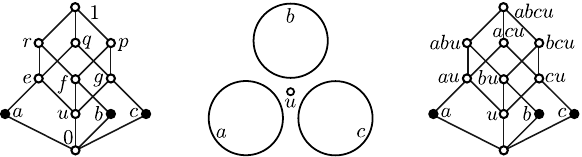}}
\caption{A three-generated lattice with four atoms}
\label{figraa}
\end{figure}

\begin{proof}[Proof of Observation~\ref{observnD}]
\nothing{We present two  easy  ways to see that 
that $L$ given in Figure~\ref{figraa}  
 satisfies the requirements. 
First, referring to the left of Figure~\ref{figraa}, observe that $L\setminus\set b$ is the Hall--Dilworth gluing of the
planar lattice $\set{0,a,u,c,e,g,q}$ and the eight-element boolean lattice $\filter u$. So $L\setminus\set b$ is a lattice. By symmetry, so are $L\setminus\set a$ and  So $L\setminus\set c$. Now if $x,y\in L$, then at least one of 
$L\setminus\set a$, $L\setminus\set b$, and $L\setminus\set c$ contains both $x$ and $y$, and it is easy to check that $x\vee y$ exists in $L$. It follows that $L$ is a lattice. This lattice is clearly generated by the three-element set $\set{a,b,c}$ of black-filled elements, it has four atoms, and it is straightforward to see that it is meet-distributive.} 

\nothing{Second,} 
Let $L$ be the lattice  given in Figure~\ref{figraa}. (In fact, $L$ is diagrammed in the figure twice.)
It is straightforward to verify  
that $L$  satisfies the requirements. 
Alternatively, we can take the four circles in the middle of the diagram. Then, understanding the labels $a$,  \dots, $bcu$, \dots{} as  $\set a$, \dots, $\set{b,c,u}$, \dots,
Cz\'edli~\cite{czgfgcircles} and \cite[Lemma 7.4]{czgcoord} immediately imply that $L$ does the job.
\end{proof}

\end{document}